\let\uml\"
\newcommand{\far}[2]{\ensuremath{( \hspace{-2pt} ( #1 ) \hspace{-2pt} )_{\ \hspace{-4pt}_#2}}}
\newcommand{\Gal}{\operatorname{Gal}}
\newcommand{\Fq}{\mathbf{F}_{q}}
\newcommand{\Z}{\mathbf{Z}}
\newcommand{\R}{\operatorname{\mathbf{R}}}
\newcommand{\Q}{\mathbf{Q}}
\newcommand{\C}{\mathbf{C}}
\newcommand{\ddef}{\colonequals}
\newcommand{\K}{\mathcal{K}}
\newcommand{\KK}{\mathsf{K}}
\newcommand{\NP}{\operatorname{NP}}
\numberwithin{equation}{subsection}
\theoremstyle{plain}
\newtheorem{thm}[equation]{Theorem}
\newtheorem{conjecture}[equation]{Conjecture}
\newtheorem{lem}[equation]{Lemma}
\newtheorem{cor}[equation]{Corollary}
\newtheorem{prop}[equation]{Proposition}
\theoremstyle{remark}
\newtheorem{exm}[equation]{Example}
\begin{document}

\title{On the Irreducibility of the Krawtchouck Polynomials}

\author{John Cullinan}
\address{Department of Mathematics, Bard College, Annandale-On-Hudson, NY 12504, USA}
\email{cullinan@bard.edu}
\urladdr{\url{http://faculty.bard.edu/cullinan/}}

%\author{Desmond Weisenberg}

\keywords{}

\begin{abstract}
The Krawtchouck polynomials arise naturally in both coding theory and probability theory and have been studied extensively from these points of view.  However, very little is known about their irreducibility and Galois properties.  Just like many classical families of orthogonal polynomials (\emph{e.g.} the Legendre and Laguerre), the Krawtchouck polynomials can be viewed as special cases of Jacobi polynomials.  In this paper we determine the Newton Polygons of certain Krawtchouck polynomials and show that they are very similar to those of the Legendre polynomials (and exhibit new cases of irreducibility).  However, we also show that their Galois groups are significantly more complicated to study, due to the nature of their coefficients, versus those of other classical orthogonal families.
\end{abstract}

\maketitle

\section{Introduction}
\subsection{Background} Let $q$ be a prime power and $n$ a positive integer.  The \emph{Krawtchouck polynomials}, defined explicitly as
\begin{align} \label{initial_defn}
K_k(n,q;x) = \sum_{j=0}^k(-q)^j(q-1)^{k-j} \binom{n-j}{k-j} \binom{x}{j},
\end{align}
arise naturally in  coding theory  over $\Fq^n$ \cite[\S1.2]{van_lint}; see also \cite[\S2.82]{szego} for a more general development. We immediately set $q=2$ for the remainder of the paper.  Because the Krawtchouck polynomials are related to the \emph{Jacobi polynomials}:
\begin{align} \label{jac_defn}
P_n^{(\alpha,\beta)}(x) = \sum_{j=0}^n \binom{n+\alpha}{n-j}\binom{n+\beta}{j} \left(\frac{x-1}{2}\right)^j \left( \frac{x+1}{2} \right)^{n-j},
\end{align}
and there are competing uses of the index ``$n$'',  we will change our notation in (\ref{initial_defn}) to be more in line with that of (\ref{jac_defn}), as follows.  Let $\Q(t)$ be a function field of transcendence degree 1 over the rational numbers $\Q$ and let $n \in \Z_{\geq 0}$.  In the polynomial ring $\Q(t)[x]$ we now set
\[
\K_n^{(t)}(x) \ddef K_n(t,2;x).
\]
By direct substitution one easily checks that 
\[
\K_n^{(t)}(x) = 2^n P_n^{(t-x-n,x-n)}(0).
\]
The arithmetic of the polynomials $\K_n^{(t)}(x)$ and their specializations for $t \in \Q$ are the primary objects of study in this paper.   Note that from the point of view of Coding Theory, $t$ is the dimension of the ambient space $\Fq^t$; for those applications one would choose $t \in \Z_{>0}$.

There are many well-known families of orthogonal polynomials that are specializations of the Jacobi polynomials.  For example, the Gegenbauer polynomials $C_n^{(\lambda)}(x)$ are given by \cite[(4.71)]{szego}
\[
C_n^{(\lambda)}(x) = \frac{\Gamma(\lambda + 1/2)}{\Gamma(2\lambda)} \, \frac{\Gamma(n+2\lambda)}{\Gamma(n+\lambda+1/2)}\, P_n^{(\lambda-1/2,\lambda-1/2)}(x),
\]
and the Legendre polynomials $P_n(x)$ are given by $P_n(x) = P_n^{(0,0)}(x)$ \cite[(4.7.2)]{szego}. By specializing ``at infinity'', one can also view the Generalized Laguerre polynomials ${L}_n^{(\alpha)}(x)$ as special cases of the  of the Jacobi polynomials \cite[(5.3.3)]{szego}:
\[
L_n^{(\alpha)}(x) = \lim_{\beta \to \infty} P_n^{(\alpha,\beta)}(1-2x/\beta).
\]
We also point out that the Hermite polynomials are, in turn, specializations of the ${L}_n^{(\alpha)}(x)$ \cite[(5.6.1)]{szego}.

The Krawtchouck polynomials do not arise by specializing the parameters $\alpha$ and $\beta$ of the Jacobi polynomials, but rather as focusing on the parameter as the ``variable''.  To the best of our knowledge, the Krawtchouck polynomials have not been extensively studied for their irreducibility and Galois properties.  And even though they are not typically viewed as a subfamily of the Jacobi polynomials, the Krawtchouck polynomials remarkably still exhibit similar arithmetical properties to classical specializations (such as the Legendre polynomials).  But at the same time, some of the standard arithmetical techniques do not immediately apply to these polynomials as they do to the classical families. 

A property that many Jacobi polynomials share that when the degree is a prime power (or close to a prime power) the Newton Polygons at that prime exhibit interesting behavior, allowing one to deduce irreducibility and Galois-theoretic information.  See \cite{ch} and \cite{wahab}  for results of this type for the Legendre polynomials and \cite{rylan} and \cite{kz}  for a family of lifts of the supersingular polynomials (originally due to Kaneko and Zagier \cite{kzorig}), with applications to elliptic curves.  Even though in some special cases we are able to prove that the Legendre polynomials and the supersinglar lifts are irreducible with large Galois group, it remains an open problem to determine the irreducibility and Galois groups for all of them.  For the Galois groups of the Generalized Laguerre Polynomials, see \cite{filaseta} and \cite{farshid} as well for complementary analyses. 

Returning to the Krawtchouck polynomials, many of the same techniques that work for classical specializations of the Jacobi polynomials simply do not bear fruit in this case.  The Krawtchouck polynomias do not satisfy a \emph{Schur factorization} \cite[Thm.~6.1]{ch}, nor do their discriminants have a predictable formula like the classical Jacobi polynomials do (as in \cite[(4)]{kz}) since we are interchanging the roles of ``parameter'' and ``variable''.  Nonetheless, some of their specializations exhibit enough interesting arithmetic information to merit further study.

\subsection{Main Results}  With all notation as above, we can now state the main results of the paper, starting with the $p$-adic Newton Polygon of $\K_n^{(t)}(x)$; we restrict to the case of $p=2$. Write the degree $n$ in base 2 as
\[
n = 2^{j_1} + 2^{j_2}+\cdots + 2^{j_k},
\]
where $0\leq j_1 < j_2 < \cdots < j_k$.  We say that a rational specialization $\K_n^{(t_0)}(x)$ has a \emph{degree-based} 2-adic Newton Polygon if $\NP_2(\K_n^{(t_0)}(x))$ has segments of length $2^{j_1},\cdots,2^{j_k}$ with slopes $-2^{-j_1},\dots,-2^{-j_k}$, respectively.

\begin{exm} \label{exm1}
Writing the polynomial $\K_{19}^{(19)}(x) = \sum_{j=0}^{19} a_jx^j$, we compute $(j,v_2(a_{n-j}))$ for $j=0,\dots,19$:
\begin{align*}
&\textbf{(0, 3)},
\textbf{(1, 2)},
(2, 3),
\textbf{(3, 1)},
(4, 4),
(5, 3),
(6, 4),
(7, 3),
(8, 3),
(9, 2),
(10, 3),\\
&(11, 5),
(12, 6),
(13, 6),
(14, 7),
(15, 12),
(16, 11),
(17, 14),
(18, 18),
\textbf{(19, 0)},
\end{align*}
yielding the 2-adic Newton Polygon
\begin{center}
\begin{tikzpicture}[scale=1,sizefont/.style={scale = 2}]
        \draw [<->,thick] (0,2) node (yaxis) [above] {}
        |- (10.5,0) node (xaxis) [right] {};
   \draw[ultra thick] (0,3/2)   --  (1/2,1); 
   \draw[ultra thick] (1/2,1)   --  (3/2,1/2); 
   \draw[ultra thick] (3/2,1/2)   --  (10,0); 
   \draw[dotted] (1/2,1)   --  (1/2,0);
   \draw[dotted] (3/2,1/2)   --  (3/2,0);
   \draw[dotted] (1/2,1)   --  (0,1);
   \draw[dotted] (3/2,1/2)   --  (0,1/2);  

  \draw (0,1/2) node[left] {$1$};
  \draw (0,1) node[left] {$2$};
  \draw (0,3/2) node[left] {$3$};
  \draw (1/2,0) node[below] {$1$};
  \draw (3/2,0) node[below] {$3$};
\draw (1/2,1) node {$\bullet$};
\draw (10,0) node {$\bullet$};
\draw (3/2,1/2) node {$\bullet$};
  \draw (10,0) node[below] {$19$};
\draw (0,3/2) node {$\bullet$};

    %\draw[ultra thick] (0,0) node[below] {0}  --  (1,0) node[below]{$i$};
    %\draw[ultra thick] (1,0)  -- (2,0.5) node[right]{\ \ $\leftarrow$ {slope} $1/(n-i) = 1/q$};
    %\draw[ultra thick] (2,0.5)  -- (3,1) node[right]{$1$};
    %\node[sizefont, red] at (0,1) [] {$\Delta_{\infty} = 0$}
    % Calculate the intersection of the lines a_1 -- a_2 and b_1 -- b_2
    % and store the coordinate in c.
   % \coordinate (c) at (2,0)%(intersection of a_1--a_2 and b_1--b_2);
    % Draw lines indicating intersection with y and x axis. Here we use
    % the perpendicular coordinate system
    %\draw[dashed] (yaxis |- c) node[left] {$y'$}
        %-| (xaxis -| c) node[below] {$x'$};
    % Draw a dot to indicate intersection point
    %\fill[red] (c) circle (2pt);
\end{tikzpicture}
\end{center}
which we verify is degree-based: $19 = 2^0 + 2^1 + 2^4$.
\end{exm}

Our first result determines certain specializations that have degree-based 2-adic Newton Polygons.  

\begin{thm} \label{npthm}
Let $n\geq1$ and write $n=2^{j_1} + 2^{j_2}+\cdots + 2^{j_k}$ with $0\leq j_1 < j_2 < \cdots < j_k$.  Then  for all $t \in [n,n+2^{j_1}) \cap \Z$, the 2-adic Newton Polygon of $\K_n^{(t)}(x)$ is degree-based.
\end{thm}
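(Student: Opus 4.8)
The plan is to read the coefficients off a generating function and to isolate a ``universal'' $2$-adic part from a unit correction. Starting from the identity $\sum_{n\ge0}\K_n^{(t)}(x)\,z^n=(1+z)^{t-x}(1-z)^x$, which one checks directly from the definition of $\K_n^{(t)}(x)$, I would write $(1+z)^{t-x}(1-z)^x=(1+z)^t\exp\!\big(-2x\operatorname{arctanh} z\big)$ and extract the coefficient of $x^{\,n-i}$. Setting $g(z)\ddef\operatorname{arctanh}(z)/z=\sum_{m\ge0}z^{2m}/(2m+1)$, the coefficient $a_{n-i}$ of $x^{\,n-i}$ becomes
\[
a_{n-i}=\frac{(-2)^{\,n-i}}{(n-i)!}\,[z^{i}]\,(1+z)^t g(z)^{\,n-i}.
\]
Every coefficient of $g$ is a $2$-adic unit, so $C_i\ddef[z^i](1+z)^t g(z)^{\,n-i}\in\Z_2$; since $v_2\big((-2)^{\,n-i}/(n-i)!\big)=s_2(n-i)$ (the number of $1$'s in the base-$2$ expansion of $n-i$), I get the clean identity and bound
\[
v_2(a_{n-i})=s_2(n-i)+v_2(C_i)\ \ge\ s_2(n-i).
\]

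Let $D$ be the degree-based polygon, with $D(i)$ its height at abscissa $i$, and put $s_m\ddef 2^{j_1}+\cdots+2^{j_m}$, $N_m\ddef n-s_m$, so the vertices of $D$ are $(s_m,k-m)$ for $0\le m\le k$. The theorem follows from two facts: (a) the broken line $i\mapsto s_2(n-i)$ lies on or above $D$; and (b) $C_{s_m}$ is odd for every $m$. Granting these, at each vertex $v_2(a_{n-s_m})=s_2(N_m)+v_2(C_{s_m})=(k-m)+0=D(s_m)$, while for every $i$ the bound above together with (a) gives $v_2(a_{n-i})\ge s_2(n-i)\ge D(i)$. Since the Newton polygon is the greatest convex function lying on or below the points $(i,v_2(a_{n-i}))$, it must lie on or above the convex function $D$, yet it meets $D$ at every vertex $(s_m,k-m)$; a convex curve that dominates the piecewise-linear $D$ and agrees with it at all breakpoints coincides with it, so $\NP_2(\K_n^{(t)}(x))=D$.

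Fact (a) is purely combinatorial. Writing $i=n-l$ and using the supporting-line description $D(n-l)=\max_m\big[(k-m)+2^{-j_m}(l-N_m)\big]$, it amounts to showing that for each $m$ the function $\phi_m(l)\ddef l-2^{j_m}s_2(l)$ attains its maximum over $l\in[0,n]$ at $l=N_m$. I would argue in two moves. First, deleting any binary digit of $l$ below position $j_m$ strictly increases $\phi_m$ and decreases $l$, so the maximum is attained at some $l$ divisible by $2^{j_m}$; writing $l=2^{j_m}w$ reduces the problem to maximizing $w-s_2(w)$ over $0\le w\le\lfloor n/2^{j_m}\rfloor$. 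Second, $w\mapsto w-s_2(w)$ is non-decreasing, and $\lfloor n/2^{j_m}\rfloor=N_m/2^{j_m}+1$ while $N_m/2^{j_m}$ is even, so the maximum is reached already at $w=N_m/2^{j_m}$, i.e.\ at $l=N_m$.

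The heart of the argument—and the only place the hypothesis $t\in[n,n+2^{j_1})$ enters—is fact (b). Write $t=n+r$ with $0\le r<2^{j_1}$. Reducing modulo $2$, using $g(z)\equiv(1+z)^{-2}$ in $\F_2[[z]]$, the identity $(1+z)^{2^j}=1+z^{2^j}$ in characteristic $2$, and $t-N_m=s_m+r$, I obtain
\[
C_{s_m}\equiv[z^{s_m}]\,(1+z)^{r}\prod_{l\le m}\big(1+z^{2^{j_l}}\big)\prod_{l>m}\big(1+z^{2^{j_l}}\big)^{-1}\pmod 2 .
\]
Now I count the ways to build the monomial $z^{s_m}$ with $s_m=\sum_{l\le m}2^{j_l}$: each inverse factor contributes only exponents that are multiples of $2^{j_{m+1}}>s_m$, so each must supply its constant term $1$; matching the remaining exponent then forces every factor $(1+z^{2^{j_l}})$ with $l\le m$ to supply $z^{2^{j_l}}$ and—crucially, because $r<2^{j_1}$ leaves no available digit below position $j_1$—forces $(1+z)^r$ to supply $1$. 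Exactly one term survives, with coefficient $1$, so $C_{s_m}\equiv1\pmod 2$. I expect this parity computation to be the main obstacle: if $r\ge 2^{j_1}$ the factor $(1+z)^r$ can inject a digit below $j_1$, several monomials may combine to produce $z^{s_m}$, and the unit property at an interior vertex can fail; controlling exactly when it cannot fail is what singles out the interval $[n,n+2^{j_1})$.
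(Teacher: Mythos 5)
Your proof is correct, and it takes a genuinely different route from the paper's. The paper expands $\binom{x}{j}$ into monomials via Stirling-type coefficients $b_k^j$, treats the special case $t=n$ first (a unit argument at the distinguished coefficients, a binary digit count for the intermediate ones), and then passes to general $t\in[n,n+2^{j_1})$ by a ``twist'' of the coefficients. You instead read everything off the generating function $(1+z)^{t-x}(1-z)^x$, factoring each coefficient as the exact $2$-power $(-2)^{n-i}/(n-i)!$ times a $2$-adic integer $C_i$; the vertex computation becomes a short calculation in $\F_2[[z]]$ where Frobenius and the hypothesis $t-n<2^{j_1}$ enter transparently, and the intermediate estimate is isolated in your combinatorial fact (a), which you prove by maximizing $l\mapsto l-2^{j_m}s_2(l)$ using monotonicity of $w\mapsto v_2(w!)$ (the paper gets the analogous statement by counting digits directly). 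Your route buys two real advantages. First, the integrality $C_i\in\Z_2$ gives the bound $v_2(a_{n-i})\ge s_2(n-i)$ uniformly in $i$ and in $t$, which is precisely the estimate the paper needs for its intermediate coefficients but, strictly speaking, only establishes at the distinguished ones. Second, you never need the paper's final twisting step, and that step is in fact erroneous as written: the identity $\K_n^{(t)}(x)=\sum_m\binom{t-m}{n-m}a_mx^m$ holds for coefficients in the binomial basis $\binom{x}{j}$, not for the monomial coefficients $a_m$ of display (\ref{am}) --- already for $n=2$ one has $\K_2^{(t)}(x)=2x^2-2tx+\binom{t}{2}$, whereas the twist would produce linear coefficient $-4(t-1)$. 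Since your argument handles every integer $t$ in the allowed range at once, it sidesteps this issue entirely (and in that sense repairs the paper's proof). What the paper's approach buys in exchange is elementarity: no power-series inversion, no exponential or $\operatorname{arctanh}$ manipulations, only binomial identities and valuations of factorials.
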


This is an identical result to \cite[Thm.~3.1]{wahab} for the shifted Legendre polynomials $P_n(2x+1)$.  The same {proof} does not carry over, however, due to the different nature of the coefficients. Among other things, the Krawtchouck polynomials do not have a discriminant formula resembling that of the Jacobi or Legendre polynomials (see \cite[Lem.~2.1]{ch}), due to the fact that the derivatives of the Krawtchouck polynomials are not themselves members of the same family.  One of the key ingredients of Wahab's proof for the Legendre polynomials is a precise formula for, and beautiful factorization of,  the numbers 
\[
\frac{d^r (P_n(x))}{dx^r} \bigg|_{x=1}
\]
due to Grosswald \cite{grosswald}.

\medskip

By taking $n = 2^k$, we obtain new families of irreducible specializations as an immediate corollary.

\begin{cor} \label{irred_cor}
Let $k\geq 0$, $n = 2^k$, and $t \in [2^k,2^{k+1}) \cap \Z$.  Then $\K_n^{(t)}(x)$ is Eisenstein at $p=2$ and hence is irreducible.
\end{cor}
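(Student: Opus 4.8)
The plan is to read the conclusion off directly from Theorem~\ref{npthm}, since the hypotheses of the corollary are exactly the one-term case of that result. First I would observe that for $n=2^k$ the base-$2$ expansion is the single term $n=2^{j_1}$ with $j_1=k$, so the interval $[n,n+2^{j_1})\cap\Z$ appearing in Theorem~\ref{npthm} is precisely $[2^k,2^{k+1})\cap\Z$, matching the range of $t$ in the statement. Applying the theorem, for each such $t$ the polygon $\NP_2(\K_n^{(t)}(x))$ is degree-based, and with only one term this means it consists of a \emph{single} segment of horizontal length $2^k=n$ and slope $-2^{-k}=-1/n$.

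Next I would translate this single-segment shape into divisibility statements on the coefficients $a_0,\dots,a_n$ of $\K_n^{(t)}(x)=\sum_{i=0}^n a_i x^i$. Following the convention of Example~\ref{exm1}, the polygon is the lower convex hull of the points $(j,v_2(a_{n-j}))$, so its left endpoint sits at $(0,v_2(a_n))$ and its right endpoint at $(n,v_2(a_0))$. I would pin down the right endpoint by computing the constant term directly from \eqref{initial_defn}: since $\binom{0}{j}=0$ for $j>0$, one gets $\K_n^{(t)}(0)=\binom{t}{n}$, and for $t\in[2^k,2^{k+1})$ Kummer's theorem (no carries occur when adding the single bit $2^k$ to $t-2^k<2^k$) shows $\binom{t}{n}$ is odd, i.e.\ $v_2(a_0)=0$. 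The segment has total vertical drop $n\cdot(1/n)=1$, so its left endpoint is at height $1$, forcing $v_2(a_n)=1$. Finally, every interior point $(j,v_2(a_{n-j}))$ with $0<j<n$ lies on or above the segment, whose height there is strictly between $0$ and $1$; as valuations are integers this gives $v_2(a_i)\ge 1$ for $1\le i\le n-1$.

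Collecting these, the coefficients satisfy $v_2(a_0)=0$, $v_2(a_i)\ge1$ for $1\le i\le n$, and $v_2(a_n)=1$. These are exactly the Eisenstein conditions at $p=2$ read on the reciprocal polynomial $x^n\K_n^{(t)}(1/x)$ (equivalently, $\K_n^{(t)}$ is Eisenstein with the roles of the constant and leading coefficients interchanged, consistent with the reversed convention of Example~\ref{exm1}). Since the constant term $a_0=\binom{t}{n}$ is nonzero, irreducibility of the reciprocal polynomial is equivalent to that of $\K_n^{(t)}(x)$, and the classical Eisenstein criterion then yields the claim.

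There is no substantive obstacle here: essentially all of the real work is carried by Theorem~\ref{npthm}, and the corollary is a matter of unwinding the definition of a degree-based polygon in the one-segment case. The only points requiring care are bookkeeping ones, namely tracking the reversal convention in the Newton polygon (so that the single factor of $2$ lands on $a_n$ rather than on $a_0$) and confirming that the left endpoint is a genuine vertex at height exactly $1$, which is what supplies the $p^2\nmid$ clause of the Eisenstein criterion; the computation $a_0=\binom{t}{n}$ together with the total drop of $1$ settles both at once.
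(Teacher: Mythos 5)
Your proof is correct and takes essentially the same route as the paper's: both apply Theorem \ref{npthm} to get a single Newton-polygon segment of slope $-1/n$, pin down the endpoint valuations $v_2(a_0)=0$ and $v_2(a_n)=1$, and conclude by the Eisenstein criterion. Your additional bookkeeping (Kummer's theorem for the oddness of $\binom{t}{n}$, and the reciprocal-polynomial convention) merely makes explicit what the paper's terser proof records as an observation.
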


Turning to the Galois groups, we observe that the shifted polynomial $\K_n^{(t)}(x+t/2)$ is an even function when $n$ is even and odd when $n$ is odd.  Write $n = 2m+\delta$, for $\delta \in \lbrace 0,1\rbrace$ and consider the polynomial $\KK_m^{(\delta,t)}(x)$, where 
\begin{align} \label{underlying}
\KK_m^{(1,t)}(x^2) &= \K_{2m+1}^{(t)}(x+t/2)/x \\
\KK_m^{(0,t)}(x^2) &= \K_{2m}^{(t)}(x+t/2).
\end{align}
Thus, when $\KK_m^{(\delta,t)}(x^2)$ is irreducible over $\Q$, it must be the case that 
\[
\Gal_\Q (\KK_m^{(\delta,t)}(x^2)) \subseteq S_2 \wr S_m,
\]
and 
\[
{\rm E}_m^{(\delta,t)} \ddef \Gal_\Q (\KK_m^{(\delta,t)}(x^2)) / \Gal_\Q (\KK_m^{(\delta,t)}(x))
\]
is an elementary 2-group of rank at most $m$. Determining the groups $\Gal_\Q (\KK_m^{(\delta,t)}(x))$ and ${\rm E}_m^{(\delta,t)}$ are very different types of calculations; see \cite[\S 4.1]{ch} for an analagous setup with the Legendre Polynomials.   Our goal is to introduce the following conjecture into the literature.

\begin{conjecture} \label{conj}
Fix $n \geq 1$ and write $n=2m+\delta$.  Then for all $t \in \Q$ such that $\KK_m^{(t,\delta)}(x)$ is irreducible, we have $\Gal_\Q (\KK_m^{(t,\delta)}(x)) \simeq S_m$. 
\end{conjecture}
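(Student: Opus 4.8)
The plan is to follow the classical three-step route to the full symmetric group: transitivity, primitivity, and the production of a transposition, finishing with a theorem of Jordan. Since we assume that $\KK_m^{(\delta,t)}(x)$ is irreducible over $\Q$, the group $G \ddef \Gal_\Q(\KK_m^{(\delta,t)}(x))$ acts transitively on its $m$ roots, so $G$ is a transitive subgroup of $S_m$. It then suffices to exhibit two local elements in $G$: a cycle of large prime length $\ell$, and a transposition.

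For primitivity I would use the following elementary fact: if a transitive $G \le S_m$ contains a cycle of prime length $\ell$ with $m/2 < \ell \le m-3$, then $G$ is primitive and in fact $G \supseteq A_m$. (Indeed, a nontrivial block of size $b$ would force the single $\ell$-cycle to lie inside one block, whence $b \ge \ell > m/2$ together with $b \mid m$ forces $b = m$, a contradiction; primitivity plus an $\ell$-cycle with $\ell \le m-3$ then yields $A_m$ by Jordan.) By Bertrand's postulate, together with known prime-gap bounds to secure $\ell \le m-3$ for $m$ large and a direct check of the finitely many small $m$, such a prime $\ell$ exists. The arithmetic input I need is an odd prime $p \nmid \ell$ at which the Newton polygon $\NP_p(\KK_m^{(\delta,t)}(x))$ has a single ramified segment of horizontal length $\ell$ whose slope has denominator $\ell$, the remaining roots being $p$-adic units lying in the unramified part. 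That segment then contributes an irreducible, tamely and totally ramified factor of degree $\ell$ over $\Q_p$, and the inertia subgroup at a prime above $p$ acts as an $\ell$-cycle on its roots while fixing the remaining roots, placing an $\ell$-cycle in $G$.

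Producing a transposition is entirely analogous: I would seek an odd prime $p$ at which the ramified part of $\KK_m^{(\delta,t)}(x)$ over $\Q_p$ has degree exactly $2$ and is tame, equivalently $p$ exactly divides the discriminant and contributes a single quadratic ramified factor. Inertia then supplies a $2$-cycle, i.e.\ a transposition $\tau \in G$. Since $\tau$ is odd we have $G \not\subseteq A_m$, so the containment $G \supseteq A_m$ from the previous step is upgraded to $G = S_m$, as desired. The natural mechanism for making the $p=2$ slopes usable here is to transport Newton-polygon data for $\KK_m^{(\delta,t)}$ back to the degree-based polygon of $\K_n^{(t)}(x)$ furnished by Theorem~\ref{npthm}, through the identity $\K_n^{(t)}(x+t/2) = x^\delta\,\KK_m^{(\delta,t)}(x^2)$, which relates the two polygons under $y = x^2$; this should be most tractable in the power-of-two cases of Corollary~\ref{irred_cor}, where the $2$-adic structure is tightly controlled.

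The main obstacle is precisely the point the paper flags as the reason the standard techniques fail: guaranteeing, for \emph{every} admissible $(m,\delta,t)$ with $\KK_m^{(\delta,t)}(x)$ irreducible, the existence of the two primes above. For the Legendre and Laguerre families one controls reductions and ramification through closed-form discriminants and the Grosswald-type factorizations of derivative values, so the required minimal ramification can be read off directly. The Krawtchouck polynomials possess neither a usable discriminant formula nor a derivative that stays within the family, so there is no a priori reason that a given large prime $\ell$, or any odd $p$, should ramify in the precise, minimal way demanded. A complete proof therefore appears to require genuinely new arithmetic input -- either an effective, Dedekind-style analysis of $\KK_m^{(\delta,t)}(x) \bmod p$, or an equidistribution (Chebotarev) argument showing that the needed factorization types must occur -- and it is exactly this missing ingredient that motivates stating the result as a conjecture.
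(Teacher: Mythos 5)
The statement you are addressing is Conjecture \ref{conj}: the paper offers no proof of it, only evidence --- a \textsf{Magma} sweep over $n \le 20$ and thousands of rational specializations, Dedekind-style factorization sampling against Jordan's criterion, and, for $n=3$, an analysis of rational points on the associated elliptic and genus-2 curves. Your proposal is likewise not a proof, and you say so yourself: everything hinges on producing, for \emph{every} irreducible specialization, an odd prime whose Newton polygon has a single tame segment of prime length $\ell$ in the Jordan range, and a second odd prime contributing exactly one tame quadratic ramified factor. Neither existence statement is established, and the missing ingredient is exactly the obstruction the paper identifies in Section \ref{comp}: the leading and constant coefficients of $\K_n^{(t)}(x)$ are $(-2)^n/n!$ and $\binom{t}{n}$, so the primes visible in the coefficients are small primes dividing $n$, which also tend to divide the discriminant, and in practice the only prime giving a nontrivial Newton polygon is $p=2$, regardless of $t$. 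Thus your outline essentially reproduces the paper's own discussion of why Jordan's criterion fails here, rather than advancing beyond it.

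One concrete step in your sketch would fail even if pursued: the suggestion to transport the degree-based $2$-adic polygon of Theorem \ref{npthm} through $\K_n^{(t)}(x+t/2) = x^\delta\,\KK_m^{(\delta,t)}(x^2)$ to obtain usable local elements. The segments produced by Theorem \ref{npthm} have slopes $-2^{-j_\ell}$, so the corresponding local factors at $p=2$ are totally ramified of $2$-power degree, hence wildly ramified; inertia there can only contribute elements of $2$-power order, never a tame cycle of odd prime length $\ell$, and $p=2$ is also excluded from the tame-transposition argument. So the $2$-adic data --- the only local data the paper can control --- is structurally incapable of feeding Jordan's criterion. A genuine proof requires new input at odd primes (an effective Dedekind or Chebotarev-type argument, or a substitute for the missing discriminant formula), which neither you nor the paper supplies; that is precisely why the statement is a conjecture.
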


A key consequence of this conjecture is that (unlike many other families of polynomials, such as the Generalized Laguerre polynomials) it implies whenever $\KK_m^{(t,\delta)}(x)$ is irreducible, its discriminant is never a square.

In contrast to the Legendre polynomials or the supersingular lifts of Kaneko and Zagier (see \cite{rylan} and \cite{ch}), we can prove almost nothing in general about the $\Gal_\Q (\KK_m^{(t,\delta)}(x))$; see Section \ref{comp} below for a discussion of this and some data.

Finally, despite not addressing the groups ${\rm E}_m^{(\delta,t)}$ in generality, we present the following Proposition which shows that not all specializations yield rank-$m$ 2-groups, hence there are interesting arithmetical constraints to be explored in the future.

\begin{prop} \label{-1prop}
Let $k\geq 2$.  Then the polynomials $\K_{2^k}^{(-1)}(x)$ are irreducible and the group ${\rm E}_{2^{k}}^{(0,-1)}$ does not have maximal 2-rank, $2^{k-1}$. 
\end{prop}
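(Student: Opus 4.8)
The plan is to treat the two assertions separately, both resting on the evaluation $\binom{-1-j}{n-j} = (-1)^{n-j}\binom{n}{j}$, which substituted into the defining sum gives the closed form
\[
\K_n^{(-1)}(x) = (-1)^n \sum_{j=0}^n 2^j \binom{n}{j}\binom{x}{j}.
\]
Throughout we take $n = 2^k$, so that $n$ is even, $\delta = 0$, and $m = 2^{k-1}$.

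For irreducibility I would read off the $2$-adic Newton polygon directly from this form. Writing $\binom{x}{j} = \tfrac{1}{j!}x(x-1)\cdots(x-j+1)$ and expanding in the monomial basis introduces only integer (Stirling) coefficients, so each monomial coefficient $a_i$ of $\K_n^{(-1)}(x)$ is an integer linear combination of the numbers $\binom{n}{j}2^j/j!$ with $j \geq i$. Since $v_2(2^j/j!) = s_2(j)$, the number of ones in the binary expansion of $j$, each such number has $v_2 \geq 1$ as soon as $j \geq 1$. Hence $v_2(a_i) \geq 1$ for $1 \leq i \leq n$, while $a_0 = (-1)^n$ has $v_2 = 0$ and the leading coefficient $a_n = (-1)^n 2^n/n!$ has $v_2 = s_2(2^k) = 1$. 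Thus the Newton polygon of $\K_{2^k}^{(-1)}(x)$ is the single segment from $(0,0)$ to $(2^k,1)$ of slope $1/2^k$; equivalently, its reciprocal polynomial is Eisenstein at $2$. As $\gcd(1,2^k)=1$, this forces irreducibility over $\Q_2$, hence over $\Q$, for every $k$.

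For the rank statement I would compute the product of the roots of $\KK_m^{(0,-1)}(y)$. The generating function identity $\sum_{n\geq 0}\K_n^{(t)}(x)z^n = (1+z)^{t-x}(1-z)^x$, obtained from the defining sum by evaluating $\sum_{n\geq j}\binom{t-j}{n-j}z^{n} = z^j(1+z)^{t-j}$ and resumming, specializes at $t=-1$, $x=-1/2$ to $(1-z^2)^{-1/2} = \sum_{m} \binom{2m}{m}4^{-m}z^{2m}$, whence $\KK_m^{(0,-1)}(0) = \K_{2m}^{(-1)}(-1/2) = \binom{2m}{m}/4^m$. Combined with the leading coefficient $4^m/(2m)!$ of $\KK_m^{(0,-1)}$ (inherited from the leading coefficient of $\K_{2m}^{(-1)}$), Vieta's formula yields
\[
\prod_{i=1}^m r_i = (-1)^m\,\frac{\KK_m^{(0,-1)}(0)}{4^m/(2m)!} = (-1)^m \left(\frac{(2m)!}{m!\,4^m}\right)^2,
\]
where $r_1,\dots,r_m$ are the roots of $\KK_m^{(0,-1)}(y)$.

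Finally I would conclude as follows. Because $\K_{2^k}^{(-1)}(x)$ is irreducible, so is $\KK_m^{(0,-1)}(x^2)$, and ${\rm E}_m^{(0,-1)}$ is, by Kummer theory, the elementary $2$-group $\Gal(L(\sqrt{r_1},\dots,\sqrt{r_m})/L)$, whose rank equals the $\F_2$-dimension of the span of $r_1,\dots,r_m$ in $L^\times/(L^\times)^2$, with $L$ the splitting field of $\KK_m^{(0,-1)}(y)$. For $k \geq 2$ the exponent $m = 2^{k-1}$ is even, so $(-1)^m = 1$ and the displayed product is a nonzero rational square; this is exactly a nontrivial $\F_2$-relation $\sum_i \bar r_i = 0$ in $L^\times/(L^\times)^2$. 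Hence that span has dimension at most $m-1$, and ${\rm E}_{2^k}^{(0,-1)}$ cannot have the maximal rank $2^{k-1}$. The main work, and the step that does not survive for general $t$, is the clean evaluation $\K_{2m}^{(-1)}(-1/2) = \binom{2m}{m}/4^m$ via the generating function, together with the observation that the resulting product of roots is a perfect square precisely when $m$ is even.
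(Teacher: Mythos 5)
Your proof is correct, and while your irreducibility argument matches the paper's, your proof of the rank statement takes a genuinely different route. For irreducibility, you and the paper make the same observation: the constant coefficient is a $2$-adic unit, the leading coefficient $(-2)^{2^k}/(2^k)!$ has valuation exactly $1$, and all intermediate coefficients have positive valuation, so the (reversed) polynomial is Eisenstein at $2$; your detour through the closed form $(-1)^n\sum_j 2^j\binom{n}{j}\binom{x}{j}$ and integer Stirling coefficients is just more explicit bookkeeping of the same estimate. For the rank statement, the paper argues softly: by Descartes' rule of signs the roots $\xi_i$ of $\KK_{2^{k-1}}^{(0,-1)}(x)$ are negative reals, so the roots of $\K_{2^k}^{(-1)}(x)$ pair off under complex conjugation; setting $\lambda=\prod_i\eta_i$ (one root from each conjugate pair), the rationality of $\lambda\overline{\lambda}$ (coming from the constant coefficient $\binom{-1}{2^k}=1$) gives ${\rm L}(\lambda)={\rm L}(\overline{\lambda})$, which the paper asserts is incompatible with maximal $2$-rank. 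You instead evaluate the product of the roots $r_i$ of $\KK_m^{(0,-1)}$ exactly, $\prod_i r_i=(-1)^m\bigl((2m)!/(m!\,4^m)\bigr)^2$, via the generating-function specialization $(1-z^2)^{-1/2}$, and feed the resulting relation $\bar r_1+\cdots+\bar r_m=0$ in ${\rm L}^\times/({\rm L}^\times)^2$ (valid exactly because $m=2^{k-1}$ is even, which is where the hypothesis $k\ge 2$ enters) into Kummer theory to conclude the rank is at most $m-1$. Your version buys two things. First, it is quantitative: it isolates the single explicit relation responsible for the rank drop and shows precisely why the parity of $m$ matters. Second, it is airtight at the one point where the paper's argument is delicate: since $\Gal({\rm M}/{\rm L})$ is an elementary abelian $2$-group and ${\rm L}\subset\R$, complex conjugation lies in $\Gal({\rm M}/{\rm L})$ and every intermediate field of ${\rm M}/{\rm L}$ is normal over ${\rm L}$, so ${\rm L}(\lambda)={\rm L}(\overline{\lambda})$ holds \emph{unconditionally}, not merely when the rank is submaximal; the coincidence of these two fields therefore cannot by itself contradict maximality. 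Your Kummer-theoretic relation supplies exactly the ingredient needed to make the rank bound rigorous, at the modest cost of invoking the generating-function identity, which the paper's softer argument avoids.
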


Conjecture \ref{conj} and Proposition \ref{-1prop} together suggest that for all specializations $t_0 \in \Q$, then Galois group $\Gal_\Q (\KK_n^{(t_0,\delta)}(x^2))$ surjects onto $S_n$, but the kernel may vary in size as $t_0$ varies over the irreducible specializations.

\section{Preliminaries and Setup}

Retaining the notation of the Introduction, we start by proving our observation on the shifted polynomials $\K_n^{(t)}(x+t/2)$.

\begin{lem} \label{evenoddlem}
If $n$ is even then $\K_n^{(t)}(x+t/2)$ is an even function and if $n$ is odd then $\K_n^{(t)}(x+t/2)$ is an odd function.
\end{lem}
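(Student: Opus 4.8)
The plan is to isolate the single nontrivial ingredient, namely the reflection symmetry
\[
\K_n^{(t)}(t-x) = (-1)^n\,\K_n^{(t)}(x),
\]
and to deduce the lemma from it formally. Granting this identity, set $g(x) = \K_n^{(t)}(x+t/2)$ and observe that $-x + t/2 = t - (x+t/2)$, so that $g(-x) = \K_n^{(t)}(t-(x+t/2)) = (-1)^n\K_n^{(t)}(x+t/2) = (-1)^n g(x)$; this is evenness of $g$ when $n$ is even and oddness when $n$ is odd, which is exactly the claim. Thus everything reduces to proving that $\K_n^{(t)}$ is symmetric about $x = t/2$ up to the sign $(-1)^n$.

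For the reflection identity itself my main route is through the generating function
\[
\sum_{n\ge 0}\K_n^{(t)}(x)\,z^n = (1-z)^x(1+z)^{t-x},
\]
viewed as an identity of formal power series in $z$ over $\Q(t)[x]$, where $(1\pm z)^s$ is the binomial series for a formal exponent $s$. I would obtain it directly from the definition $\K_n^{(t)}(x)=\sum_j(-2)^j\binom{t-j}{n-j}\binom{x}{j}$: interchanging the $n$- and $j$-summations, evaluating $\sum_{m\ge0}\binom{t-j}{m}z^m = (1+z)^{t-j}$, factoring out $(1+z)^t$, and recognizing $\sum_j\binom{x}{j}\bigl(\tfrac{-2z}{1+z}\bigr)^j = \bigl(\tfrac{1-z}{1+z}\bigr)^x$ collapses the double sum to $(1+z)^{t-x}(1-z)^x$. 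The symmetry is then immediate: substituting $x\mapsto t-x$ gives right-hand side $(1-z)^{t-x}(1+z)^x$, whereas replacing $z\mapsto -z$ in the original gives $\sum_n(-1)^n\K_n^{(t)}(x)z^n = (1+z)^x(1-z)^{t-x}$; the two agree, and comparing coefficients of $z^n$ produces the reflection identity.

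A second, arguably cleaner, route uses the relation $\K_n^{(t)}(x) = 2^nP_n^{(t-x-n,\,x-n)}(0)$ recorded above together with the classical Jacobi reflection $P_n^{(\alpha,\beta)}(-x) = (-1)^nP_n^{(\beta,\alpha)}(x)$, which at $x=0$ reads $P_n^{(\alpha,\beta)}(0) = (-1)^nP_n^{(\beta,\alpha)}(0)$; since the parameter pairs attached to $\K_n^{(t)}(x)$ and $\K_n^{(t)}(t-x)$ are the transposes $(t-x-n,x-n)$ and $(x-n,t-x-n)$, this gives the identity at once, and the Jacobi reflection itself is a one-line reindexing $j\mapsto n-j$ in (\ref{jac_defn}). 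I do not anticipate a genuine obstacle, as the lemma is elementary once the reflection about $x=t/2$ is isolated; the only point needing care is the formal-power-series bookkeeping, in particular justifying the substitutions $x\mapsto t-x$ and $z\mapsto -z$ while $t$ is carried as a transcendental — which is legitimate because each coefficient of $z^n$ is an honest polynomial identity in $\Q(t)[x]$ and substitution is applied coefficientwise.
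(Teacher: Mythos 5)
Your proof is correct, but it takes a genuinely different route from the paper's. The paper argues by direct induction on the coefficients: it writes out the special case $t=n$ with $n$ even, setting $F_n(u)=\sum_{j=0}^n \frac{(-1)^j}{j!}\prod_{k=0}^{j-1}(u+n-2k)$ so that $\K_n^{(n)}(x+n/2)=F_n(2x)$, proves by induction on even $n$ that $F_n$ is an even function, and then asserts that the general case (other $t$, and odd $n$) ``follows similarly, but is more notationally cumbersome.'' You instead isolate the reflection identity $\K_n^{(t)}(t-x)=(-1)^n\,\K_n^{(t)}(x)$ and deduce the lemma in one line via $-x+t/2=t-(x+t/2)$; the identity itself you obtain either from the generating function $\sum_{n\ge0}\K_n^{(t)}(x)z^n=(1-z)^x(1+z)^{t-x}$ (comparing the substitution $x\mapsto t-x$ against $z\mapsto -z$), or, more cleanly, from the relation $\K_n^{(t)}(x)=2^nP_n^{(t-x-n,\,x-n)}(0)$ already recorded in the paper together with the Jacobi symmetry $P_n^{(\alpha,\beta)}(-y)=(-1)^nP_n^{(\beta,\alpha)}(y)$, which is indeed a one-line reindexing $j\mapsto n-j$ in (\ref{jac_defn}). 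Both of your routes are sound: the formal-power-series substitutions you flag are legitimate for exactly the reason you give (each coefficient of $z^n$ is a polynomial identity in $\Q(t)[x]$, and $x\mapsto t-x$, $z\mapsto -z$ are ring homomorphisms applied coefficientwise), and the Jacobi route avoids such bookkeeping entirely since there everything is a finite polynomial identity in the parameters. What your approach buys is a complete and uniform proof — all $t$ and both parities of $n$ are handled at once, whereas the paper's argument as written treats only one special case and leaves the rest to analogy. What the paper's approach buys is self-containedness at the level of raw coefficients: it never invokes the generating function or the Jacobi connection, at the cost of notational weight and an avowedly incomplete write-up.
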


\begin{proof}
We are content to give a proof of a single special case; the general result follows similarly, but is more notationally cumbersome.  Let $n$ be even and set $t=n$.  Then 
\begin{align*}
K_n^{(n)}(x) &= \sum_{j=0}^n (-2)^j  \binom{x+n/2}{j} \\
&= \sum_{j=0}^n \frac{(-2)^j}{j!} \prod_{k=0}^{j-1} (x + n/2 - k) \\
&= \sum_{j=0}^n \frac{(-1)^j}{j!} \prod_{k=0}^{j-1} (2x + n - 2k).
\end{align*}
Let $F_n(u) = \sum_{j=0}^n \frac{(-1)^j}{j!} \prod_{k=0}^{j-1} (u + n - 2k)$.  We will show that $F_n(u)$ is an even function for all even $n$ by induction on the degree, where the base case $F_0(u) =1$.  

Let $n > 0$ and consider $F_{n+2}(u)$.  We start by unwinding 
\[
u+n+2 = (u+n-2j+2)+2j
\] 
so that the product
\[
\prod_{k=0}^{j-1} (u+n+2-2k)
\]
can be written as the sum
\[
(u+n) \cdots  (u+n-2j+2) + 2j(u+n) \cdots (u+n - 2j+4).
\]
If we first separate  $F_{n+2}(u)$ as follows,
\begin{align*}
F_{n+2}(u) &= \sum_{j=0}^{n+1} \frac{(-1)^j}{j!} \prod_{k=0}^{j-1} (u + n + 2 - 2k)  + \frac{(-1)^{n+2}}{(n+2)!} \prod_{k=0}^{n+1} (u+n+2-2k),
\end{align*}
then the sum can be rewritten as
\begin{align*}
\sum_{j=0}^{n+1} \frac{(-1)^j}{j!} \prod_{k=0}^{j-1} (u + n + 2 - 2k) &= \sum_{j=0}^{n+1} \frac{(-1)^j}{j!} \prod_{k=0}^{j-1} (u + n  - 2k) + \sum_{j=0}^{n+1} \frac{(-1)^j}{j!} 2j \prod_{k=0}^{j-2} (u + n - 2k).
\end{align*}
In turn, we have
\begin{align*}
\sum_{j=0}^{n+1} \frac{(-1)^j}{j!} \prod_{k=0}^{j-1} (u + n  - 2k) &= F_n(u) + \frac{(-1)^{n+1}}{(n+1)!} \prod_{k=0}^{n} (u + n  - 2k),\text{ and}  \\
\sum_{j=0}^{n+1} \frac{(-1)^j}{j!} 2j \prod_{k=0}^{j-2} (u + n - 2k) &= -2F_n(u).
\end{align*}
By induction, the polynomial $F_n(u)$ is an even function.  Hence it remains to show that the remaining terms,
\begin{align}\label{last}
\frac{(-1)^{n+1}}{(n+1)!} \prod_{k=0}^{n} (u + n  - 2k) + \frac{(-1)^{n+2}}{(n+2)!} \prod_{k=0}^{n+1} (u + n +2 - 2k),
\end{align}
are even. But 
\begin{align*}
\frac{(-1)^{n+2}}{(n+2)!} \prod_{k=0}^{n+1} (u + n +2 - 2k) &= \frac{(-1)^{n+2}}{(n+2)!} \left[ [u+n+2] \prod_{k=0}^{n} (u + n +2k) \right] \\
&=\frac{(-1)^{n+2}u}{(n+2)!} \prod_{k=0}^{n} (u + n +2k) + \frac{(-1)^{n+2}}{(n+1)!} \prod_{k=0}^{n} (u + n +2k).
\end{align*}
Substituting into (\ref{last}), we are left to consider 
\[
\frac{(-1)^{n+2}u}{(n+2)!} \prod_{k=0}^{n} (u + n +2k) = \frac{(-1)^{n+2}}{(n+2)!} (u+n)\cdots(u+2)u^2(u-2)\cdots(u-n),
\]
which is clearly even. 
\end{proof}

As explained above, this puts a constraint on the Galois group of $\K_n^{(t)}(x)$, suggesting that we divide our study of the Galois theory of $\K_n^{(t)}(x)$ into two parts: the Galois group of $\KK_m^{(\delta,t)}(x)$ as in (\ref{underlying}) and the group ${\rm E}_m^{(\delta,t)}$.  

Before turning to the proofs of our main results we set our convention for the $p$-adic Newton Polygon.  If $p$ is a prime number and $f \in \Q[x]$ is a polynomial written explicitly as $f(x) = \sum_{j=0}^n a_j x^j$, then the $p$-adic Newton Polygon of $f$, denoted $\NP_p(f)$ is the lower convex hull of the points
\begin{align} \label{np_def}
(n-j,v_p(a_j)) \in \Z \times \Z.
\end{align}
The \emph{breaks} of $\NP_p(f)$ are the coordinates (\ref{np_def}) where there is a change of slope.  For example, the breaks of the Newton Polygon in Example \ref{exm1} occur at $(0,3)$, $(1,2)$, $(3,1)$, and $(19,0)$.  

In the course of our proofs we will need the $p$-adic valuation of the factorial; this is well-known.  For an integer $n$, let it's base-$p$ expansion be given by 
\[
n = \sum_{j=0}^k c_j p^j
\]
for integers $0 \leq c_j \leq p-1$.  Then 
\begin{align} \label{ordfactorial}
v_p(n!) = \frac{n-\sum_{j=0}^kc_j}{p-1}.
\end{align}

\section{Proofs}

With all of the notation and setup complete, we turn to the proofs of results described in the Introduction, starting with the shape of the 2-adic Newton Polygon at certain specializations. 

\begin{proof}[Proof of Theorem \ref{npthm}]
Our proof will proceed in several steps.  First, we will consider the polynomials $\K_n^{(n)}(x) = \sum_{j=0}^na_jx^j$ and show that for these polynomials, we have
\[
v_2 (a_{n - \sum_{\ell =1}^r 2^{j_\ell}}) = k-r.
\]
This will establish $k+1$ distinguished pairs: $(\sum_{\ell =1}^r2^{j_\ell},k-r)$ for $r=0,\dots,k$.  We next show that these are the breaks of the 2-adic Newton Polygon by showing that the valuations of the intermediate coefficients lie above those of the $k+1$ distinguished coefficients.  Finally, for specializations $t \in [n,n+2^{j_1})$, we show that the breaks of the Newton Polygons occur at the same places.

Write $\K_n^{(n)}(x) = \sum_{j=0}^n (-2)^j \binom{x}{j} = \sum_{m=0}^n a_mx^m$ with 
\[
\binom{x}{j} = \frac{1}{j!}\sum_{k=1}^j b_k^{j} x^k.
\]
The superscript ``$j$'' is understood to be an index, not an exponent.  Substituting this into the expression for $\K_n^{(n)}(x)$, we get that
\begin{align} \label{am}
a_m = \frac{(-2)^m}{m!} \left( 1 + \frac{(-2)b_m^{m+1}}{m+1} + \frac{(-2)^2b_{m}^{m+2}}{(m+1)(m+2)} + \cdots + \frac{(-2)^{n-m}b_{m}^{n}}{(m+1)(m+2)\cdots n} \right).
\end{align}

We now set $m = n - \sum_{\ell =1}^r 2^{j_\ell}$.  Observe that $m$ is even (so that $m+1$ is odd), $b_m^{s} \in \Z$ for all $s \geq m+1$, and that 
\[
v_2 \left( \frac{(-2)^{s-m}}{(m+1) \cdots s} \right) \geq 1
\]
for all $s \geq m+1$.  Thus, 
\[
v_2 \left( 1 + \frac{(-2)b_m^{m+1}}{m+1} + \frac{(-2)^2b_{m}^{m+2}}{(m+1)(m+2)} + \cdots + \frac{(-2)^{n-m}b_{m}^{n}}{(m+1)(m+2)\cdots n} \right) = 0,
\]
hence $v_2(a_m) = m - v_2(m!)$. Now apply (\ref{ordfactorial}) to get
\[
v_{2} \left(a_{n - \sum_{\ell =1}^r 2^{j_\ell}} \right) = {n - \sum_{\ell =1}^r 2^{j_\ell}} - \left( n - \sum_{\ell =1}^r 2^{j_\ell} - (k-r) \right) = k-r.
\]
This produces our $k+1$ distinguished points and completes the first part of the proof.

\medskip

Next we turn to the intermediate coefficients.  By our work above, the distinguished points occur at the coordinates 
\[
\left( \sum_{\ell =1}^r2^{j_\ell},k-r \right)
\]
for $r=0,\dots,k$.  Fix an $x$-coordinate $u \in [0,n] \cap \Z$ that is not one of the $\sum_{\ell =1}^r2^{j_\ell}$.  Then 
\[
2^{j_1} + \cdots + 2^{j_r} < u < 2^{j_1} + \cdots + 2^{j_{r+1}},
\]
so that 
\[
2^{j_{r+1}} + \cdots + 2^{j_k} > n-u > 2^{j_{r+2}} + \cdots + 2^{j_k}.
\]
Thus the 2-adic expansion of $n-u$ is
\[
n-u = 2^{j_k} + \cdots 2^{j_{r+2}} + 2^{s_1} + \cdots 2^{s_h},
\]
with $h \geq 1$, and so the number of 2-adic digits of $n-u$ is 
\[
k-(r+2)+1+h \geq k-r.
\]
Therefore, for all such $u$, the 2-adic valuation of (\ref{am}) lies strictly above the segment connecting the points
\[
\left( \sum_{\ell =1}^r2^{j_\ell},k-r \right) \text{ and } \left( \sum_{\ell =1}^{r+1} 2^{j_\ell},k-r-1 \right).
\]
This establishes that the distinguished points are the breaks of Newton Polygon of $\K_n^{(n)}(x)$. 

\medskip

Finally, let $t \in [n,n+2^{j_1})$.  We will show that the 2-adic Newton Polygon of $\K_n^{(t)}(x)$ is identical to that of $\K_n^{(n)}(x)$ by showing that the breaks are the same.  Since $\K_n^{(t)}(x)  = \sum_{j=0}^n (-2)^j \binom{t-j}{n-j} \binom{x}{j}$, we can view $\K_n^{(t)}(x)$ as a twist of $\K_n^{(n)}(x)$:
\[
\K_n^{(t)}(x) = \sum_{m=0}^n \binom{t-m}{n-m} a_mx^m,
\]
where the $a_m$ are the same as in (\ref{am}). Writing $t = n + \epsilon$ so that
\[
\binom{t-m}{n-m} = \frac{(\epsilon + 1)(\epsilon+2) \cdots (\epsilon + n-m)}{1\cdot 2 \cdots (n-m)}.
\]
since $n = \sum_{\ell=1}^k 2^{j_\ell}$, as long as $0 \leq \epsilon < 2^{j_1}$ and $m=\sum_{\ell=1}^r 2^{j_\ell}$ the numerator and denominator will have exactly the same 2-valuation, \emph{i.e.}, that
\[
(\sum_{\ell =1}^r2^{j_\ell},k-r) 
\]
are vertices of the Newton Polygon for $r=0,\dots,k$.  And since the $\binom{t-m}{n-m}$ are always integers, the valuations at the intermediate coefficients are at least as large as when $t=n$, we conclude that the Newton Polygons are identical, as claimed. 
 \end{proof}

As observed in the Introduction, Theorem \ref{npthm} immediately implies the following Corollary.

\begin{cor}
Let $k\geq 0$, $n = 2^k$, and $t \in [2^k,2^{k+1}) \cap \Z$.  Then $\K_n^{(t)}(x)$ is Eisenstein at $p=2$ and hence is irreducible.
\end{cor}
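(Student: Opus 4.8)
The plan is to deduce the statement directly from Theorem \ref{npthm} by specializing to the single-digit case $n = 2^k$ and reading off the shape of the resulting Newton Polygon. First I would observe that when $n = 2^k$ the base-$2$ expansion $n = 2^{j_1} + \cdots + 2^{j_k}$ collapses to the single term $2^k$, so in the notation of Theorem \ref{npthm} there is exactly one digit, with $j_1 = k$. The admissible range $t \in [n, n + 2^{j_1}) \cap \Z$ then becomes $[2^k, 2^k + 2^k) \cap \Z = [2^k, 2^{k+1}) \cap \Z$, precisely the range in the statement. Hence Theorem \ref{npthm} applies and $\NP_2(\K_n^{(t)}(x))$ is degree-based.

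Next I would unwind what ``degree-based'' means here. With a single binary digit the Newton Polygon consists of one segment, of length $2^{j_1} = 2^k = n$ and slope $-2^{-j_1} = -1/n$. Writing $\K_n^{(t)}(x) = \sum_{m=0}^n a_m x^m$, this says the only breaks are the endpoints $(0, v_2(a_n)) = (0,1)$ and $(n, v_2(a_0)) = (n,0)$, and that every interior point $(n-m, v_2(a_m))$ with $0 < m < n$ lies on or above the connecting segment. Since that segment has height $1 - u/n \in (0,1)$ at each interior abscissa $u$, and each $v_2(a_m)$ is a nonnegative integer, this forces $v_2(a_0) = 0$, $v_2(a_n) = 1$, and $v_2(a_m) \geq 1$ for all $0 < m < n$.

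These are exactly the Eisenstein conditions at $p=2$, read off the reciprocal polynomial $x^n \K_n^{(t)}(1/x) = \sum_{m} a_m x^{n-m}$: its leading coefficient $a_0$ is a $2$-adic unit, its constant term $a_n$ has valuation exactly $1$, and all intervening coefficients are divisible by $2$. Equivalently, the single segment of slope $-1/n$ shows all roots share valuation $-1/n$, whose reduced denominator $n$ forces total ramification of degree $n$ over $\Q_2$. Either way $\K_n^{(t)}(x)$ is Eisenstein at $2$ (after reversal), hence irreducible over $\Q_2$ and a fortiori over $\Q$.

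I expect the only point needing care to be the bookkeeping around the word ``Eisenstein.'' Because the coefficients of $\K_n^{(t)}(x)$ carry the denominators $m!$, the polynomial is not monic and it is the constant term, not the leading coefficient, that is the $2$-adic unit; the literal Eisenstein polynomial is therefore the reciprocal $x^n \K_n^{(t)}(1/x)$. Confirming that passage to the reciprocal preserves irreducibility is immediate, since the constant term $a_0$ is a $2$-adic unit and in particular nonzero. Everything else is a direct specialization of Theorem \ref{npthm}, so no substantive further obstacle arises.
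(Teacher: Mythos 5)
Your proposal is correct and follows essentially the same route as the paper: specialize Theorem \ref{npthm} to $n=2^k$, so the Newton polygon is a single segment of slope $-1/n$ from $(0,1)$ to $(n,0)$, which is the (reversed, since the polynomial is not monic) Eisenstein condition at $2$. One small caveat: the exact endpoint heights $v_2(a_n)=1$ and $v_2(a_0)=0$ are not literally part of the ``degree-based'' shape (which only fixes slopes and lengths, hence only the difference of the heights) and should be checked directly from the explicit coefficients $(-2)^n/n!$ and $\binom{t}{n}$ --- exactly the observation the paper's proof makes --- though your alternative argument via total ramification of degree $n$ over $\Q_2$ yields irreducibility without needing that anchoring.
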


\begin{proof}
If $n=2^k$, then a degree-based 2-adic Newton Polygon consists of a single segment.  Writing $\K_n^{(t)}(x) = \sum a_jx^k$, observe that for all $t\in [2^k,2^{k+1})$ we have $v_2(a_n) = 1$ and $v_2(a_0)=0$, so that this segment does not pass through any other integer points in the plane.  Hence $\K_n^{(t)}(x)$ is Eisenstein at $p=2$.
\end{proof}

Now we prove Proposition \ref{-1prop}.

\begin{proof}[Proof of Proposition \ref{-1prop}]
First we prove that the polynomials $\K_{2^k}^{(-1)}(x)$ are irreducible.  Observe that for any $n>0$, the 
leading coefficient of $\K_n^{(t)}(x)$ is $(-2)^n/n!$ (independent of $t$), and the constant coefficient is $\binom{t}{n}$. If $t=-1$, then $\binom{-1}{n} = (-1)^n$.  If, in addition, $n=2^k$ is a power of 2, then 
\[
v_2 \left( \frac{(-2)^{2^k}}{2^k!}\right) = 1.
\]
The intermediate coefficients are all even (since $(-2)^j$ has larger 2-valuation than $(2^k - j)!$) and so $\K_{2^k}^{(-1)}(x)$ is Eisenstein at 2 and hence irreducible. 

Turning to the Galois group, all of the roots of $\KK_{2^{k-1}}^{(0,-1)}(x)$ are negative real numbers by Descartes' rule of signs, hence all of the roots of $\K_{2^k}^{(-1)}(x-1/2)$ are purely imaginary.  Thus, we can list the roots of $\KK_{2^{k-1}}^{(0,-1)}(x)$ as
\[
\lbrace \xi_1,\dots,\xi_{2^{k-1}} \rbrace \subseteq \R_{<0},
\]
and those of $\K_{2^k}^{(-1)}(x-1/2)$ as
\[
\lbrace \theta_1,\overline{\theta}_1,\dots,\theta_{2^{k-1}}, \overline{\theta}_{2^{k-1}} \rbrace \subseteq \C \setminus \R,
\]
with the convention that $\theta_i\overline{\theta}_i = \xi_i$.

Let ${\rm L} = \Q(\xi_1,\dots,\xi_{2^{k-1}})$ be the splitting field of $\KK_{2^{k-1}}^{(0,-1)}(x)$ with Galois group $\Gal({\rm L}/\Q)$. If ${\rm M}$ is the splitting field of $\K_{2^{k}}^{(-1)}(x)$ over $\Q$, then ${\rm M}$ is generated over ${\rm L}$ by the elements $\lbrace \theta_1,\overline{\theta}_1,\dots,\theta_{2^{k-1}}, \overline{\theta}_{2^{k-1}} \rbrace$, and $\Gal({\rm M}/{\rm L})$ is an elementary 2-group of rank at most $2^{k-1}$.  We will now show that the rank is $< 2^{k-1}$.

To do this, we translate back and consider the polynomial $\K_{2^k}^{(-1)}(x)$, with roots
\[
\eta_i:=-1/2 + \theta_i, \text{ and } \overline{\eta}_i,
\]
for $i = 1,\dots, 2^{k-1}$. We note that since this is a linear shift in the polynomial, all of the splitting fields of the shifts are identical to those of the original polynomials.  Since the degree of $\K_{2^k}^{(-1)}(x)$ is even, its constant coefficient is $\binom{-1}{n}=1$, hence 
\[
\prod_{i=1}^{2^{k-1}} \eta_i\overline{\eta}_i = \underbrace{\prod_{i=1}^{2^{k-1}} \eta_i}_{\lambda} \underbrace{\overline{\prod_{i=1}^{2^{k-1}}\eta_i}}_{\overline{\lambda}} = 1.
\]
It follows that the fields ${\rm L}(\lambda)$ and ${\rm L}(\overline{\lambda})$ coincide.  But if $\Gal({\rm M}/{\rm L})$ had maximal 2-rank ($2^{k-1}$), these fields would be distinct.  Thus $\Gal({\rm M}/{\rm L})$ must have 2-rank $<2^{k-1}$.
\end{proof}

\section{Galois Groups and Computations} \label{comp}

There is a criterion of Jordan that, when satisfied, allows one to conclude a Galois group is large.  Let $f \in \Q[x]$ and define the \emph{Newton Index} $\mathcal{N}_f$ of $f$ to be the least common multiple of the denominators of all slopes of all $\NP_p(f)$ as $p$ ranges over all the primes \cite{farshid}.

\begin{thm}[Jordan's Criterion, \cite{jordan}]
Let $f = \Q[x]$ be irreducible of degree $n \geq 8$.  Suppose that $\mathcal{N}_f$ has a prime divisor $\ell$ in the range $n/2 < \ell < n-2$.  Then $\Gal(f)$ contains $A_n$. 
\end{thm}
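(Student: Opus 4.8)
The plan is to reduce the statement to a classical theorem of Jordan on primitive permutation groups, the bridge being that the divisibility hypothesis on $\mathcal{N}_f$ forces $\Gal(f)$ to contain an $\ell$-cycle. Throughout write $G \ddef \Gal(f)$, regarded as a transitive subgroup of $S_n$ through its action on the $n$ roots of $f$; transitivity is exactly the irreducibility of $f$.

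First I would extract the local input. Since $\ell$ is a prime dividing $\mathcal{N}_f$, and $\mathcal{N}_f$ is the least common multiple of the denominators of the slopes of all the $\NP_p(f)$, there is a prime $p$ for which $\NP_p(f)$ has a segment whose slope, written in lowest terms as $-h/e$, satisfies $\ell \mid e$. By the standard correspondence between Newton Polygons and valuations of roots, the roots $\theta \in \overline{\Q_p}$ governed by this segment satisfy $v_p(\theta) = h/e$; since $\gcd(h,e)=1$, the value group of $\Q_p(\theta)$ contains $1/e$, so the ramification index $e(\Q_p(\theta)/\Q_p)$ is divisible by $e$, hence by $\ell$. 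Choosing a prime $\mathfrak{P}$ of the splitting field $L$ of $f$ above $p$, the decomposition group $D_{\mathfrak{P}} \le G$ has order $[L_{\mathfrak{P}}:\Q_p]$, which is divisible by $[\Q_p(\theta):\Q_p]$ and therefore by $\ell$. Thus $\ell \mid |G|$, and by Cauchy's theorem $G$ contains an element $\sigma$ of order $\ell$. Because $\ell$ is prime, $\sigma$ is a product of disjoint $\ell$-cycles together with fixed points; since $\ell > n/2$ leaves no room for two disjoint $\ell$-cycles, $\sigma$ is a single $\ell$-cycle fixing the remaining $n-\ell$ points.

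Next I would upgrade transitivity to primitivity. Suppose $G$ preserved a nontrivial block system with blocks of size $b$, where necessarily $2 \le b \le n/2$ and $2 \le n/b \le n/2$. The prime cycle $\sigma$ induces on the $n/b < \ell$ blocks a permutation of order dividing $\ell$, which must then be trivial; so $\sigma$ fixes each block setwise, and on each block of size $b < \ell$ it again acts with order dividing $\ell$, hence trivially. This forces $\sigma = \mathrm{id}$, a contradiction, so $G$ is primitive. Finally, the hypothesis $\ell < n-2$ gives $n - \ell \ge 3$, so $\sigma$ is a cycle of prime length fixing at least three points. Jordan's classical theorem — that a primitive subgroup of $S_n$ containing a cycle of prime length which fixes at least three points must contain $A_n$ — now yields $A_n \le G$, as claimed.

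I expect the only real care to be needed in the first step: correctly invoking the Newton Polygon/valuation dictionary to see that a slope denominator divisible by $\ell$ forces the ramification index of some $\Q_p(\theta)/\Q_p$, and hence the order of a decomposition group sitting inside $G$, to be divisible by $\ell$, giving $\ell \mid |G|$. The remaining steps — Cauchy's theorem, the elementary counting that pins down the cycle type and establishes primitivity, and the citation of Jordan's theorem on primitive groups — are routine; the hypothesis $n \ge 8$ serves only to guarantee $\ell \ge 5$, keeping us clear of the small-degree exceptional primitive groups where such a conclusion could otherwise fail.
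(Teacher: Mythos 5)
Your proposal is correct and takes exactly the route the paper intends: the paper does not prove this theorem (it is quoted from Jordan and the Newton-index formulation of \cite{farshid}), offering only the one-sentence idea that a transitive subgroup of $S_n$ containing an element of prime order $\ell$ with $n/2 < \ell < n-2$ must contain $A_n$. Your argument soundly supplies all the missing details of that sketch — the Newton polygon/ramification step forcing $\ell \mid |\Gal(f)|$, Cauchy's theorem plus the counting that pins down a single $\ell$-cycle with at least three fixed points, the block argument for primitivity, and the classical theorem of Jordan on primitive groups.
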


The idea behind the criterion is that a transitive subgroup of $S_n$ containing an element of prime order  $\ell$  in the range $n/2 < \ell <n-2$ must contain $A_n$.  It is then a matter of checking if the discriminant is a square to determine if the Galois group is $A_n$ or $S_n$.   

In practice, one computes a number of Newton polygons and looks for slopes with a prime denominator in the ``Jordan range''.  In previous work on  the Kaneko-Zagier Polynomials \cite{rylan}, the Legendre Polynomials \cite{ch},  and certain Generalized Laguerre Polynomials \cite{verity}, we were able to implement Jordan's Criterion easily.  That was due to the nature of the coefficients of the polynomials. 

For example, in \cite{ch} we studied the polynomials
\[
\mathscr{J}_n^{\pm}(x) \ddef \sum_{j=0}^n \binom{n}{j} \far{2j \pm 1}{n}\, x^j = \sum_{j=0}^n c_jx^j
\]
where $\far{\alpha}{n} = (\alpha+2)(\alpha + 4) \cdots (\alpha +2n)$.  The prime divisors of the coefficients $c_j$ can roughly be divided into two parts:
\[
c_j = \underbrace{\binom{n}{j}}_{\text{divisible by $p \leq n$}} \cdot \underbrace{\far{2j\pm 1}{n}}_{\text{divisible by $p \in [2j \pm 1,2j \pm 1 + 2n]$}}.
\]
By appealing to results and conjectures (\emph{e.g.}, the Hardy-Littlewood conjecture) on the distribution of primes in intervals of length $2n$, we put forward a strong conjecture that once $n$ is sufficiently large that there will be a $p$-adic Newton Polygon of the following shape:
\begin{center}
\begin{tikzpicture}[scale=1,sizefont/.style={scale = 2}]
        \draw [<->,thick] (0,2) node (yaxis) [above] {}
        |- (10.5,0) node (xaxis) [right] {};
   \draw[ultra thick] (0,3/2)   --  (7,0); 
   \draw[ultra thick] (7,0)   --  (10,0); 
   \draw (6.5,1) node[left] {{ $\longleftarrow  \ \ \text{slope} \atop \ \ \ \ \   -1/\ell$}};
  \draw (0,3/2) node[left] {$$};
\draw (10,0) node {$\bullet$};
\draw (7,0) node {$\bullet$};
  \draw (10,0) node[below] {$n$};
\draw (0,3/2) node {$\bullet$};
\end{tikzpicture}
\end{center}

However, the coefficients of the Krawtchouck polynomials do not have such an agreeable form and do not lend themselves readily to Jordan's Criterion.  

Dedekind's Criterion, which says that the factorization of an integral polynomial modulo a good prime $p$ describes the cycle type of the Frobenius as a conjugacy class in the global Galois group, is an easy-to-implement numerical check that a Galois group is large: sample many primes and determine whether there is a prime-order element in the Jordan range.

\begin{exm}
Consider the polynomial $\K_{20}^{(20)}(x)$ and it's associated $\KK_{10}^{(0,20)}(x)$. Clearing denominators, one obtains an integral irreducible polynomial with discriminant 
\begin{align*}
&2^{28}\cdot3^{50}\cdot5^{33}\cdot7^8 \cdot 2857\cdot3371 \cdot \\
&3080247982713573950046529683277689810503273830007221192065657784224004955821.
\end{align*}
There are 410 primes in the interval $7 < p < 2857$.  Of these, 65 give a factorization indicating that the $\Gal_\Q (\KK_{10}^{(0,20)}(x))$ has an element of order 7, hence contains $A_{10}$.  Since the discriminant is not a rational square, we have $\Gal_\Q (\KK_{10}^{(0,20)}(x)) \simeq S_{10}$.
\end{exm}

The issue at hand for the general Krawtchouck polynomial is that none of this behavior is visible from the point of view of the Newton Polygon. In particular, the leading and constant coefficients of $\K_n^{(t)}(x)$ are
\[
\frac{(-2)^n}{n!} \text{ and } \binom{t}{n},
\]
respectively, indicating that small primes (those dividing $n$) are the likely candidates for interesting Newton Polygons, but these primes are also likely to divide the discriminant of the polynomial.  In practice, one finds (roughly) that the only prime that gives rise to interesting Newton Polygons is $p=2$, regardless of specialization $t \in \Q$.

In \textsf{Magma} \cite{magma} we performed the following experiment:  For all $\delta \in \lbrace 0,1 \rbrace$, $a \in [-10^4,10^4] \cap \Z$, $b \in [1,10^4] \cap \Z$, and $n=1,2,\dots,20$, whenever the polynomial $\KK_n^{(\delta,a/b)}(x)$ was irreducible, we computed its Galois group to be $S_n$.  Based on this evidence, we make the following conjecture. 

\begin{conjecture}
For all specializations $t \in \Q$ at which $\KK_n^{(\delta,t)}(x)$ is irreducible of degree $n$, the Galois group $\Gal_\Q(\KK_n^{(\delta,t)}(x)) \simeq S_n$.
\end{conjecture}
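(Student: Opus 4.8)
The plan is to prove the statement in two stages: first establish that the \emph{generic} Galois group over the function field $\Q(t)$ is the full symmetric group, and then descend to individual rational specializations. Since irreducibility of $\KK_n^{(\delta,t)}(x)$ is assumed, its Galois group is already a transitive subgroup of $S_n$, so it suffices to prove two things: that the group contains $A_n$, and that the discriminant $\disc_x \KK_n^{(\delta,t)}$ is not a square in $\Q$. The latter is exactly the obstruction singled out after Conjecture~\ref{conj}, and together with $A_n \subseteq \Gal_\Q$ it forces the group to be $S_n$. The finitely many small degrees $n \leq 7$ fall outside the hypotheses of Jordan's Criterion and would be disposed of by direct computation, so throughout one assumes $n \geq 8$.

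For the generic group I would compute $\Gal_{\Q(t)}(\KK_n^{(\delta,t)}(x)) \subseteq S_n$ using the fact that it is the \emph{largest} group occurring among specializations: the decomposition group at any good rational value $t_0$ embeds into it. Hence it is enough to exhibit a single rational $t_0$ whose specialized group is already $S_n$. The \textsf{Magma} data quoted above (\emph{e.g.}~$\Gal_\Q(\KK_{10}^{(0,20)}(x)) \simeq S_{10}$) supplies such a $t_0$ in each computed degree, and in general one would produce one via Dedekind's Criterion by sampling primes until a Frobenius is an $\ell$-cycle with $\ell$ in the Jordan range $n/2 < \ell < n-2$, together with a prime giving a transposition. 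Granting this, $S_n \subseteq \Gal_{\Q(t)} \subseteq S_n$, so the generic group is $S_n$, and Hilbert's Irreducibility Theorem yields $\Gal_\Q(\KK_n^{(\delta,t)}(x)) \simeq S_n$ for all $t$ outside a thin set $T_n \subset \Q$.

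The entire remaining content of the conjecture is the assertion that $T_n$ contains \emph{no} irreducible specializations, and this is where the difficulty concentrates. The thin set decomposes according to the maximal subgroups $H < S_n$: for each $H$, the values $t$ with $\Gal_\Q \subseteq H$ correspond to rational points on a cover $X_H \to \PP^1_t$. The intransitive $H$ are excluded by the irreducibility hypothesis, leaving the proper transitive subgroups. The index-two case $H = A_n$ is the hyperelliptic cover $y^2 = \disc_x \KK_n^{(\delta,t)}$, whose genus grows with $n$, so Faltings' theorem bounds its rational points, after which one must check that none of the finitely many exceptional $t$ yields an irreducible polynomial with square discriminant. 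The imprimitive and remaining primitive proper subgroups each contribute a further cover $X_H$ whose integral points must likewise be shown to avoid the irreducible locus.

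The main obstacle is twofold and, I expect, presently out of reach. First, the Krawtchouck polynomials admit no closed-form discriminant (contrast \cite[Lem.~2.1]{ch} and \cite[(4)]{kz}), so even writing down the curve $y^2 = \disc_x \KK_n^{(\delta,t)}$ explicitly, let alone analyzing its rational points uniformly in $n$, is problematic. Second, the Newton-polygon route that makes Jordan's Criterion effective for the Legendre and Kaneko--Zagier families fails here: as noted in Section~\ref{comp}, only $p=2$ produces Newton Polygons with nontrivial structure, and by Theorem~\ref{npthm} its slopes have denominators that are powers of $2$, hence never a prime in the Jordan range. Consequently the Newton Index $\mathcal{N}_f$ cannot be used to force an element of suitable prime order, and one is thrown back onto per-prime Dedekind computations that do not obviously organize into a uniform bound on $T_n$. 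Supplying, for \emph{every} irreducible specialization, a Jordan-range prime-order element of the Galois group by some means independent of the Newton Polygon — or else bounding the Diophantine exceptional set uniformly in $n$ — is the crux of any proof, and is precisely the step where the techniques available for the classical families break down.
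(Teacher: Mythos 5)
The statement you set out to prove is a \emph{conjecture}: the paper offers no proof of it, only computational evidence, so there is no author's argument to measure yours against. What the paper does provide is (i) a \textsf{Magma} experiment verifying the statement for $n \leq 20$ over a large box of rational $t$, and (ii) for $n=3$, exactly the Diophantine reduction you describe: reducible specializations correspond to rational points on curves via \cite{hw} (elliptic curves of positive rank, hence infinitely many such $t$), while irreducible specializations with group $A_3$ correspond to rational points on explicit genus-$2$ hyperelliptic curves $s^2 = \disc_x \KK_3^{(\delta,t)}$, finite in number by Faltings; a point search up to height $10^7$ then finds only points coming from reducible specializations. So your framework --- intransitive subgroups excluded by hypothesis, the $A_n$-cover $y^2 = \disc_x \KK_n^{(\delta,t)}$ handled by Faltings, remaining transitive maximal subgroups handled by covers $X_H$ --- is precisely the paper's own way of organizing its evidence, and your diagnosis of why Jordan's Criterion fails here (only $p=2$ yields structured Newton Polygons, and slopes $-2^{-j}$ can never supply a prime denominator in the Jordan range) matches the discussion in Section~\ref{comp}.

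The gaps you flag are genuine, and they are why this remains a conjecture; but two of them are worse than your write-up suggests. First, Faltings' theorem is ineffective: even for a single fixed $n$ it gives no height bound on the rational points of $y^2 = \disc_x \KK_n^{(\delta,t)}$, so your step ``after which one must check that none of the finitely many exceptional $t$ yields an irreducible polynomial with square discriminant'' cannot be carried out even in principle from Faltings alone --- the paper's search up to $10^7$ for $n=3$ is evidence rather than proof for exactly this reason. Second, your stage one (that the generic group over $\Q(t)$ is $S_n$) is itself unproven for general $n$: the specialization-plus-Dedekind procedure you propose is a per-$n$ computation with no termination guarantee and no uniformity in $n$, and the paper states that essentially nothing can be proved about $\Gal_\Q(\KK_m^{(\delta,t)}(x))$ in general (see the remark following Conjecture~\ref{conj}). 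In short, your proposal is a correct road map of the problem and an accurate account of where it dead-ends, but it is not --- and to your credit does not claim to be --- a proof.
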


In particular, we conjecture that the Galois group never gets smaller than $A_n$ under specialization (hence the discriminant is never a rational square at irreducible specializations).  We conclude with the special case $n=3$ to explore this idea.

\begin{exm}
Consider the polynomials $\KK_3^{(\delta,t)}(x)$.  Shifting to remove the trace term and dividing by the leading coefficient, we obtain the following: 
\begin{align*}
\KK_3^{(0,t)}(x) & \sim x^3 + \left(\frac{-15}{8}t^2 + \frac{95}{8}t - \frac{131}{6} \right)x + \left( \frac{-5}{8}t^3 + \frac{55}{8}t^2 -\frac{325}{12}t +\frac{965}{27}\right) \\
\KK_3^{(1,t)}(x) & \sim x^3 + \left(\frac{-21}{8}t^2 + \frac{175}{8}t - \frac{637}{12} \right)x + \left(\frac{-7}{8}t^3 + \frac{105}{8}t^2 - \frac{833}{12}t + \frac{3305}{27} \right).
\end{align*}
The reducible specializations of these polynomials are parameterized by the rational points on the   curves defined by these polynomials, by \cite{hw}.  These curves are elliptic curves and can be transformed into Weierstrass form with explicit expressions
\begin{align*}
\KK_3^{(0,t)}(x) &= 0 \qquad \rightarrow \qquad E_0:=y^2 + xy + y = x^3 - x^2 - 62705x + 5793697 \\
\KK_3^{(1,t)}(x) &=0 \qquad \rightarrow  \qquad E_1:=y^2 + xy + y = x^3 - x^2 - 722882x + 185853889.
\end{align*}
In \textsf{Magma}, one computes that $E_0(\Q) \simeq E_1(\Q) \simeq \Z \times \Z \times \Z \times \Z$, hence each has infinitely many reducible specializations. 

If $t\in \Q$ is such that $\KK_3^{(\delta,t)}(x)$ is irreducible, then we check to see if the associated discriminants are squares to determine if the Galois group is $A_3$ or $S_3$.  Computing their discriminants and checking whether they are squares leads us to consider rational points on the genus-2 hyperelliptic curves
\begin{align*}
s^2&= \frac{3}{128} \left( 675t^6 - 11475t^5 + 81225t^4 - 303125t^3 + 622860t^2 - 668080t + 304704\right) \\
s^2&= \frac{3}{128} \left(2205t^6 - 50715t^5 + 492009t^4 - 2561825t^3 + 7539882t^2 - 11982460t + 8267304\right),
\end{align*}
when $\delta = 0,1$, respectively.  By Faltings' theorem, there are finitely many rational points on these curves, hence only finitely many specializations with Galois group $A_3$.  A search in \textsf{Magma} for rational points up to the bound $10^7$ yields the points:
\begin{align*}
\delta = 0:\qquad & (3,117/2),(4,165/4), (5,48), (6, 120), (14, 7680) \\
\delta =1: \qquad & (2,63/4),(3,21/2),(4,12),(5, 48), (12,3072), 
\end{align*}
all of which correspond to reducible specializations, giving evidence for the conjecture that all irreducible specializations have Galois group $S_3$. 
\end{exm}

\end{document}